\newtheorem{theorem}{Theorem}
\newcommand{\R}{\mathbb R}
\numberwithin{equation}{section}
\begin{document}

\title{Rigidity and minimizing properties of quasi-local mass}
\author{Po-Ning Chen and Mu-Tao Wang}
\date{\today}

\begin{abstract}
In this article, we survey recent developments in defining the quasi-local mass in general relativity. We discuss various approaches and the properties and applications of the different definitions.  Among the expected properties, we focus on the rigidity property: for a surface in the Minkowski spacetime, one expects that the mass should vanish. We describe the Wang-Yau quasi-local mass whose definition is motivated by this rigidity property and by the Hamilton-Jacobi analysis of the Einstein-Hilbert action. In addition, we survey recent results on the minimizing property the Wang-Yau quasi-local mass.
\end{abstract}

\thanks{P.-N. Chen is supported by NSF grant DMS-1308164 and M.-T. Wang is supported by NSF grants DMS-1105483 and DMS-1405152.} 
\maketitle
\section{Introduction}
One of the major unsolved  problems in general relativity is the definition of conserved quantities such as energy and linear momentum for a finitely extended region in a spacetime \cite{P1}. Many important statements in general relativity make sense only with the presence of a good notion of quasi-local mass. For example, the hoop conjecture predicts the formation of black holes due to the condensation of mass in a finite region. However, there are several difficulties in finding a good notion of energy for a finite region. Firstly, there is no mass density in general relativity due to Einstein's equivalence principle.  Moreover, a generic spacetime does not have symmetry which is important in defining conserved quantities using Noether's principle. As a result, mass can  no longer be defined as a bulk integral of mass density. However, it is conjectured that the mass and other conserved quantities can be defined as integral on the boundary of the region. Such definitions are referred to as quasi-local mass. Moreover, it is expected that a good notion of quasi-local energy should satisfy the following properties  \cite{Christodoulou-Yau,Penrose2}.

\begin{itemize} 
\item Positivity. Quasi-local energy should be positive under the dominant energy condition.
\item Rigidity. The quasi-local mass should be zero for 2-surfaces in the Minkowski spacetime. However, the quasi local energy-momentum could be null (thus quasi-local mass is zero) in a pure radiation spacetime.
\item Asymptotics. The large sphere limits should recover the ADM and Bondi mass in spatial and null infinity, respectively. The small sphere limits should recover matter density in non-vacuum and Bel-Robinson tensor in vacuum.
\item Monotonicity. The quasi-local mass should have some nice monotonicity properties. However, it is not expected that straightforward additivity should hold because the gravitational binding energy could be negative.
\end{itemize}

In comparison to the ADM or Bondi total mass for an isolated system where gravitation is weak at boundary (infinity), the notion of quasi-local mass corresponds to a non-isolated system where gravitation could be strong. 
There are four main approaches to define quasi-local mass based on different expected properties. We give some brief remarks for each approach below.
\begin{itemize} 
\item The variational method. This approach is based on quasi-localizing the ADM mass \cite{Bartnik2}. While the approach guarantees the positivity and monotonicity, it is generally very hard to evaluate such notion of quasi-local mass.
\item Hamilton-Jacobi method. This approach gives the energy as a flux integral on the 2-surface using the Hamilton-Jacobi analysis of the gravitational energy. The surface Hamiltonian is derived in \cite{by1,by2}. See also \cite{hh,ki}.
\item Hawking mass. The Hawking mass \cite{Hawking} is easy to compute and is monotone under the inverse mean curvature flow. However, the positivity does not always hold.
\item Twistor and spinor method. The construction is motivated by the energy-momentum integrals of linearized gravity and is based on twistor ideas. See \cite{Tod} for a survey on the twistor approach to quasi-local mass.
\end{itemize}

Based on their properties, the above definitions of quasi-local mass can be applied to studying different problems in general relativity and differential geometry. For example, the Hawking mass has good asymptotics and monotonicity properties. As a result, it plays a key role in the proof of the Riemannian Penrose inequality of Huisken-Ilmanen. However, the lack of the rigidity property for the Hawking mass hinder the proof of the Penrose inequality at null infinity, as we will describe in later section of this article.  

In this article, we survey some recent developments on quasi-local mass. While we discuss several notions of quasi-local mass and their properties, the main focus is the Wang-Yau quasi-local mass defined in \cite{Wang-Yau1,Wang-Yau2}. This approach is based on Hamilton-Jacobi analysis of energy in general relativity. It is also based on the expectation of the rigidity property that the mass for any surface in the Minkowski spacetime should be zero, which will be the main focus of this survey. 

\section{Riemannian quasi-local energy}
In this section, we survey several definitions of quasi-local mass and describe their main properties.
\subsection{Hawking mass}
By studying the perturbation of the Friedmann-Robertson-Walker (FRW) spacetimes, Hawking defined the following mass which measures the perturbation of the energy from the FRW background \cite{Hawking}.  Let $\Sigma$ be  a spacelike 2-surface in a spacetime $N$ and  $H$ be the mean curvature vector of $\Sigma$ in $N$. The Hawking mass of $\Sigma$ is defined by
\[m_H(\Sigma)=\sqrt{\frac{|\Sigma|}{16\pi}}(1-\frac{1}{16\pi}\int_\Sigma |H|^2 d\Sigma).\]

For a time-symmetric initial data set, the Hawking mass is monotonic increasing under the inverse mean curvature flow. Moreover, for an asymptotically flat initial data, the Hawking mass on large coordinate spheres approaches the ADM mass of the initial data. These properties of the Hawking mass are  instrumental in the  Huisken-Ilmanen's proof of Riemannian Penrose conjecture for a single black hole \cite{HI} which states that the area of the outermost minimal surface gives a lower bound for the ADM mass of a time symmetric initial data.

Despite of the above nice properties, the positivity does not always hold for the Hawking mass, even for surfaces in the Minkowski space. In fact, from the Gauss equation and the Gauss-Bonnet theorem, it is easy to see that for a surface in $\R^3$ with spherical topology, its Hawking mass is always non-positive. Moreover, the Hawking mass is zero if and only if it is the round sphere. 

On the other hand, the Hawking mass is non-negative for stable 2-spheres on time-symmetric hypersurfaces  by Christodoulou and Yau. The positivity is obtained using the positivity of the second variation of the area functional applied to the conformal mapping of the surface to the unit sphere \cite{Christodoulou-Yau}.

\subsection{Bartnik mass}
  Bartnik proposed a quasi-local mass definition by quasi-localizing the ADM mass for asymptotically flat initial data  \cite{Bartnik2}. For simplicity, we focus on the time-symmetric case, namely for 3-manifolds with non-negative scalar curvature.

Let $(\Omega, g)$  be a compact Riemannian 3-manifold of non-negative scalar curvature and with boundary. An admissible extension of $\Omega$ is a complete and asymptotically flat 3-manifold $(\widetilde M,g)$ of non-negative scalar curvature in which $\Omega$ embeds isometrically and is not  enclosed by any compact minimal surfaces.  The Bartnik mass $m_B(\Omega)$ is defined to be the infimum of the ADM mass among all admissible extensions. 

From the definition of Bartnik mass, the non-negativity property is acquired for free, as a consequence of the positive mass theorem.  It is clear that  the Bartnik mass of $(\Omega, g)$ is zero  if the metric $g$ is flat. The converse is also true  as a consequence of the proof of the Riemannian Penrose inequality \cite{HI}. The Bartnik mass is monotonically increasing. Namely, $m_B(\Sigma_2) \ge m_B(\Sigma_1)$ if $\Sigma_2$ contains $\Sigma_1$ isometrically. This follows from the definition since the set of admissible extensions of $\Sigma_2$ is a subset of those of $\Sigma_1$ if $\Sigma_2$ contains $\Sigma_1$.

Despite of the above nice properties, the definition is abstract for both conceptual and computational reasons and one would like to know that the infimum is actually realized by a natural extension of $(\Omega, g)$. This is the content of the static minimization conjecture by Bartnik.

\medskip

\textbf{Static Minimization Conjecture.}  \textit{The infimum
$m_{B}(\Omega)$ is realized by an admissible extension $(\widetilde M,g)$
which is smooth, vacuum, and static outside $\Omega$, is $C^{0,1}$ across
$\partial\Omega$, and has nonnegative scalar curvature across $\partial \Omega$
(in the distributional sense).}

\medskip

See \cite{Andersson-Khuri} for discussion on the  static minimization conjecture, in particular, on the existence and uniqueness of static extension.

\subsection{Brown-York mass}
 
Let $\Sigma$ be a spacelike 2-surface in $N$ whose induced metric has positive Gauss curvature. Assume further that $\Sigma$ bounds a spacelike hypersurface $\Omega$. By the solution of the Weyl’s isometric embedding problem by Nirenberg, there is a unique isometric embedding of $\Sigma$ into $\R^3$. Suppose  $H_0$ is the mean curvature of the isometric embedding of $\Sigma$ into $\R^3$ and $H$ is the mean curvature of $\Sigma$ in $\Omega$. Brown and York define the mass of $\Sigma$ to be
\begin{equation}\label{Brown-York-Mass}
m_{BY}(\Sigma)=\frac{1}{8\pi}\int_\Sigma ( H_0-  H ) \, d \Sigma
\end{equation}
The expression is derived through the Hamilton-Jacobi analysis of Einstein's action with a choice of gauge adopted to the hypersurface $\Omega$.

For time-symmetric hypersurfaces with $k=0$, the dominant energy condition implies that the scalar curvature is non-negative. Shi and Tam proved that the Brown-York mass is positive if the scalar curvature of the enclosed regions is non-negative \cite{Shi-Tam}. Their main idea is to solve the prescribed scalar curvature equation on the exterior of the image of isometric embedding in $\R^3$ using the quasi-spherical method of Bartnik. On the resulting manifold, the Brown-York mass decreases monotonically to the ADM mass at infinity and the positivity of the Brown-York mass follows from the positivity of the ADM mass. While the manifold is not smooth across the boundary of $\Omega$, the proof of the positive mass theorem by Witten  \cite{W} can still be carried out in this setting. 

However,  the Brown-York mass is gauge dependent. Liu and Yau define a mass that is gauge independent by replacing the mean curvature $H$ in equation \eqref{Brown-York-Mass} by the norm of the mean curvature vector of $\Sigma$ in $N$. The Liu-Yau mass is gauge independent and is positive under the dominant energy condition \cite{Liu-Yau}. The proof is based on the idea from the above proof of Shi and Tam and the Jang equation used in the proof of the positive mass theorem by Schoen and Yau \cite{SY2}.

\section{Rigidity property of quasi-local mass}
The main motivation of our investigation is the rigidity property of quasi-local mass. A physically meaningful mass should vanish on any Minkowskian data, as there is no gravitation energy in the Minkowski spacetime.

Though the Liu-Yau mass satisfies the important positivity property, there are 2-surfaces in the Minkowski spacetime
 with strictly positive Liu-Yau mass. In fact, \'{O} Murchadha, Szabados, and Tod \cite{ost} found examples of surfaces in the Minkowski spacetime with  arbitrarily large Liu-Yau mass. These examples are surfaces in the standard light cone of the Minskowski spacetime. For such a surface, the Gauss curvature and the mean curvature vector $\vec{H}$ satisfies the following identity
\[ 4K = |\vec{H}|^2. \]
Using the Gauss equation for surfaces in $\R^3$, it is not hard to see that the Liu-Yau mass can be arbitrarily large for such surfaces. The missing of the momentum information is responsible for this inconsistency.  For the Liu-Yau mass, the reference is taken to be the isometric embedding into $\R^3$ which is a totally geodesics hypersurface. In order to capture the information of the second fundamental form, we need to take the reference surface to be a general isometric embedding into the Minkowski spacetime.

The Hawking mass is always negative for a 2-surface in $\R^3$ unless it is a metric round sphere. On the other hand, the Hawking mass is always zero for surfaces in the standard light cone in $\R^{3,1}$. Moreover, for surfaces in the standard light cone in the Schwarzschild spacetime, the Hawking mass is always greater than the total mass of the Schwarzschild spacetime, unless the surface also lies in the static hypersurface. Moreover, the Hawking mass can be arbitrarily large. Such a drawback hinders the proof of the Penrose inequality at null infinity using the Hawking mass, even though there is a corresponding monotonicity formula along the null, see \cite{Sauter}.

There is a similar phenomenon for the Hawking mass for surfaces in asymptotically Anti-de-Sitter (Ads) spacetimes. Namely, for surfaces approaching the infinity, the Hawing mass can be arbitrarily large depending on the shape of the surface. While this shows the lack of rigidity for the Hawking mass, this "drawback" of the Hawking mass is used as a tool to prove insufficient convergence of inverse mean curvature flow in asymptotically AdS spacetimes \cite{Neves}.
\section{Wang-Yau quasi-local energy}
In this section, we review the definition of the quasi-local energy-momentum in  \cite{Wang-Yau1,Wang-Yau2}. The main motivation of this definition is the rigidity property that surfaces in the Minkowski spacetime should have zero mass. As a result, all possible isometric embeddings $X$ of the surface into $\R^{3,1}$ are used  as references and an energy is assigned to each pair $(X, T_0)$ of an isometric embedding $X$ and a unit timelike vector $T_0$ in $\R^{3,1}$.

Let $\Sigma$ be a closed embedded spacelike 2-surface in a spacetime with spacelike mean curvature vector $H$. The data used in the definition of the Wang-Yau quasi-local mass is the triple $(\sigma,|H|,\alpha_H)$ where $\sigma$ is the induced metric on $\Sigma$, $|H|$ is the norm of the mean curvature vector and $\alpha_H$ is the connection one form of the normal bundle with respect to the mean curvature vector
\[ \alpha_H(\cdot )=\langle \nabla^N_{(\cdot)}   \frac{J}{|H|}, \frac{H}{|H|}  \rangle  \]
where $J$ is the reflection of $H$ through the incoming  light cone in the normal bundle.

Given an isometric embedding $X:\Sigma\rightarrow \R^{3,1}$ and a constant future timelike unit vector $T_0\in \R^{3,1}$, we consider the projected embedding $\widehat{X}$ into the orthogonal complement of $T_0$. We denote the induced metric, the second fundamental form, and the mean curvature of the image by $\hat{\sigma}_{ab}$, $\hat{h}_{ab}$, and $\widehat{H}$, respectively.  The Wang-Yau quasi-local energy with respect to $(X, T_0)$ is 
\[\begin{split}&E(\Sigma, X, T_0)=\int_{\widehat{\Sigma}}\widehat{H}d{\widehat{\Sigma}}-\int_\Sigma \left[\sqrt{1+|\nabla\tau|^2}\cosh\theta|{H}|-\nabla\tau\cdot \nabla \theta -\alpha_H ( \nabla \tau) \right]d\Sigma,\end{split}\] where $\theta=\sinh^{-1}(\frac{-\Delta\tau}{|H|\sqrt{1+|\nabla\tau|^2}})$, $\nabla$ and $\Delta$ are the gradient and Laplacian, respectively, with respect to $\sigma$ and $\tau= - X \cdot T_0$ is the time function.

In \cite{Wang-Yau1,Wang-Yau2}, it is proved that, $E(\Sigma, X, T_0) \ge 0$ if $\Sigma$ bounds a spacelike hypersurface, the dominant energy condition holds in $N$ and the pair $(X,T_0)$ is admissible. The Wang-Yau quasi-local mass is defined to be the minimum  of the quasi-local energy $E(\Sigma, X, T_0)$ among all admissible pairs $(X, T_0)$. In particular, for a surface in the Minkowski spacetime, its Wang-Yau mass is zero. However, for surfaces  in a generic spacetime, it is not clear which isometric embedding would minimize the quasi-local energy. To find the isometric embedding that minimizes the quasi-local energy, we study the Euler-Lagrange equation for the critical point of the Wang-Yau energy as a functional of $\tau$. It is the following fourth order nonlinear elliptic equation

\[
 -(\widehat{H}\hat{\sigma}^{ab} -\hat{\sigma}^{ac} \hat{\sigma}^{bd} \hat{h}_{cd})\frac{\nabla_b\nabla_a \tau}{\sqrt{1+|\nabla\tau|^2}}+ div_\sigma (\frac{\nabla\tau}{\sqrt{1+|\nabla\tau|^2}} \cosh\theta|{H}|-\nabla\theta-\alpha_{H})=0
\]
coupled with the isometric embedding equation for $X$. It is referred to as the optimal isometric embedding equation.

The data for the image of the isometric embedding in the Minkowski spacetime can be used to simplify the expression for the quasi-local energy and the optimal isometric embedding equation. Denote the norm of the mean curvature vector and the connection one-form in mean curvature gauge of the image surface of $X$ in $\R^{3,1}$ by $|H_0|$ and $\alpha_{H_0}$, respectively. We have the following identities relating the geometry of the image of the isometric embedding $X$ and 
the image surface $\widehat{\Sigma}$ of $\widehat{X}$  \cite{Chen-Wang-Yau2}.
\[\sqrt{1+|\nabla\tau|^2}\widehat{H} =\sqrt{1+|\nabla\tau|^2}\cosh\theta_0|{H_0}|-\nabla\tau\cdot \nabla \theta_0 -\alpha_{H_0} ( \nabla \tau) \]
\[   -(\widehat{H}\hat{\sigma}^{ab} -\hat{\sigma}^{ac} \hat{\sigma}^{bd} \hat{h}_{cd})\frac{\nabla_b\nabla_a \tau}{\sqrt{1+|\nabla\tau|^2}}+ div_\sigma (\frac{\nabla\tau}{\sqrt{1+|\nabla\tau|^2}} \cosh\theta_0|{H_0}|-\nabla\theta_0-\alpha_{H_0})=0.
\]

The second identity simply states that a surface inside $\R^{3,1}$ is a critical point of the quasi-local energy with respect to other isometric embeddings back to $\R^{3,1}$. This can be proved by either the positivity of energy or a direct computation. We  substitute these relations into the expression for $E(\Sigma, X, T_0)$ and the optimal isometric embedding equation, and rewrite them in term of

\[
\begin{split}
 \label{rho}\rho &= \frac{\sqrt{|H_0|^2 +\frac{(\Delta \tau)^2}{1+ |\nabla \tau|^2}} - \sqrt{|H|^2 +\frac{(\Delta \tau)^2}{1+ |\nabla \tau|^2}} }{ \sqrt{1+ |\nabla \tau|^2}}
\\ 
j_a & =\rho {\nabla_a \tau }- \nabla_a [ \sinh^{-1} (\frac{\rho\Delta \tau }{|H_0||H|})]-(\alpha_{H_0})_a + (\alpha_{H})_a.\end{split}
\]

In terms of these, the quasi-local energy is $\frac{1}{8\pi}\int_\Sigma (\rho+j_a\nabla^a\tau)$ and a pair of an embedding $X:\Sigma\hookrightarrow \mathbb{R}^{3,1}$ and an observer $T_0$ satisfies the optimal isometric embedding equation if $X$ is an isometric embedding and 
\[
div_{\sigma} j=0.
\] 
\section{Minimizing properties of the Wang-Yau quasi-local mass}
In this section, we discuss the minimizing properties of the Wang-Yau quasi-local mass. In general, the optimal embedding equation is a nonlinear system and is rather difficult to solve. However, $\tau=0$ is a solution of the optimal embedding equation if $div \alpha_H=0$. This special case was studied by Miao--Tam--Xie  \cite{MTX}. They estimate the second variation of quasi-local energy around the critical point $\tau=0$  by linearizing the optimal embedding equation nearby. It is shown that the second variation is 
\[
\frac{1}{8 \pi} \int_{\Sigma} \frac{(\Delta f)^2}{H} + (H_0 - |H|) |\nabla f|^2 - \hat h(\nabla f, \nabla f).
\]
Assuming $H_0 > |H|$, it is shown that the integral is positive using a generalization of Reilly's formula. As a result, they obtained sufficient conditions for the critical point $\tau=0$ to be a local minimum. 

To study the minimizing property of a general solution of the optimal isometric embedding equation, a different method has to be devised to deal with the fully nonlinear nature of the equation. In \cite{Chen-Wang-Yau2}, we proved the following result.
\begin{theorem} \label{thm_local}
Suppose  $\Sigma$ is a spacelike 2-surface and $\tau_0$ is a critical point of the quasi-local energy functional $E(\Sigma, \tau)$. Assume further that 
\[ |H_{\tau_0}| > |H| >0\]
where $H_{\tau_0}$ is the mean curvature vector of the isometric embedding $X_{\tau_0}$ of $\Sigma$ into $\R^{3,1}$ with time function $\tau_0$.
Then, 
 $\tau_0$ is a local minimum for  $E(\Sigma,\tau)$.
\end{theorem}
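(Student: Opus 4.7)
The plan is to compare $E(\Sigma,\tau)$ to an auxiliary energy $\bar E(\Sigma,\tau)$ obtained by replacing the physical data $(|H|,\alpha_H)$ with the ``Minkowskian'' data $(|H_{\tau_0}|, \alpha_{H_{\tau_0}})$ read off from the critical reference surface $X_{\tau_0}(\Sigma) \subset \R^{3,1}$, while keeping the reference embedding $X_\tau$ variable. Because the underlying physical surface of the auxiliary data actually sits in $\R^{3,1}$, the Wang--Yau positivity theorem applied to that surface gives $\bar E(\Sigma,\tau) \geq 0$ for every admissible $\tau$, with $\bar E(\Sigma,\tau_0) = 0$; the second identity displayed in the excerpt (saying that a surface in $\R^{3,1}$ is a critical point of its own quasi-local energy with respect to any other isometric embedding back into $\R^{3,1}$) tells us that $\tau_0$ is then also a critical point of $\bar E$. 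Setting $D(\tau) := E(\Sigma,\tau) - \bar E(\Sigma,\tau)$ I obtain the decomposition
\[
E(\Sigma, \tau) - E(\Sigma, \tau_0) \;=\; \bar E(\Sigma, \tau) \;+\; \bigl[D(\tau) - D(\tau_0)\bigr],
\]
so since the first summand is nonnegative it suffices to prove $D(\tau) \geq D(\tau_0)$ for $\tau$ in a neighborhood of $\tau_0$. The virtue of $D$ is that the reference mean curvature $|H_0|$ cancels out of $\rho - \bar\rho$, so $D$ depends on the variable embedding $X_\tau$ only mildly.

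Next I would compute $D(\tau)$ explicitly using $E = \tfrac{1}{8\pi}\int_\Sigma(\rho + j_a \nabla^a \tau)\,d\Sigma$, the identity $\sqrt{1+|\nabla\tau|^2}\cosh\theta|H| = \sqrt{(1+|\nabla\tau|^2)|H|^2 + (\Delta\tau)^2}$, and integration by parts on the $j$-term. The integrand then splits into a distinguished pointwise-positive piece
\[
\sqrt{(1+|\nabla\tau|^2)|H_{\tau_0}|^2 + (\Delta\tau)^2} \;-\; \sqrt{(1+|\nabla\tau|^2)|H|^2 + (\Delta\tau)^2},
\]
whose strict positivity follows from $|H_{\tau_0}| > |H|$ together with the admissibility condition $|H|^2(1+|\nabla\tau|^2) > (\Delta\tau)^2$; a signed $\sinh^{-1}$ piece; and a cross term of the form $\int_\Sigma(\alpha_H - \alpha_{H_{\tau_0}})(\nabla\tau)\,d\Sigma$ whose sign is \emph{indefinite}. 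Because both $E$ and $\bar E$ are critical at $\tau_0$, so is $D$; hence the first-order variation of $D(\tau) - D(\tau_0)$ vanishes and the question reduces to showing that the second-order expansion of $D$ at $\tau_0$ is nonnegative.

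The main obstacle is controlling the indefinite $\alpha$-cross term by the coercive positive piece above. I expect to extract a coercivity constant from the strict gap $|H_{\tau_0}|^2 - |H|^2 \geq c > 0$ (which holds uniformly on the compact surface $\Sigma$ under the hypothesis of the theorem) and feed it into the Hessian of $D$ at $\tau_0$, in analogy with the Miao--Tam--Xie formula but with the roles of the physical $|H|$ and the reference mean curvature interchanged. Concretely, the quadratic form on the variation $\eta := \tau - \tau_0$ should feature a term of the shape $(\Delta\eta)^2/|H_{\tau_0}|$ arising from linearizing $\Delta\tau$, plus lower-order terms in $\nabla\eta$, and a Reilly-type identity applied to the critical embedding $X_{\tau_0}$ (whose image has positive mean curvature $|H_{\tau_0}|$ and good second fundamental form $\hat h$) should force positivity of the full Hessian. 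A continuity/compactness argument then extends positivity of the second variation from $\tau_0$ to a full neighborhood, giving $D(\tau) \geq D(\tau_0)$ and hence $E(\Sigma, \tau) \geq E(\Sigma, \tau_0)$ locally.
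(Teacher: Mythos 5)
Your decomposition is not actually a different route: with $\bar E(\Sigma,\tau)=E(\Sigma_{\tau_0},\tau)$, the inequality $D(\tau)\ge D(\tau_0)$ you reduce to is word-for-word the comparison inequality \eqref{comparison} of Theorem \ref{thm_int}, and $\bar E\ge 0$ is the positivity of $E(\Sigma_{\tau_0},\tau)$; so the entire weight of the proof rests on establishing $D(\tau)\ge D(\tau_0)$, and that is where there is a genuine gap. You propose to get it from nonnegativity of the second variation of $D$ at $\tau_0$ plus a ``continuity/compactness argument.'' Neither half is secured. Nonnegativity of the Hessian of a functional at a critical point does not imply local minimality in infinite dimensions, and here the Hessian necessarily degenerates along the four-parameter family $\delta\tau=a_0+\sum a_i x_i$ (changing the observer/translating the reference), so at best you get a positive semidefinite, non-coercive form; there is no general principle that upgrades this to $D(\tau)\ge D(\tau_0)$ on a neighborhood. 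Worse, the indefinite cross term $\int_\Sigma(\alpha_H-\alpha_{H_{\tau_0}})(\nabla\tau)\,d\Sigma$, which you correctly identify as the main obstacle, is exactly the part you leave to an expectation: the coercivity you hope to extract from the gap $|H_{\tau_0}|>|H|$ lives in the zeroth- and second-order pieces of $\eta=\tau-\tau_0$, and there is no a priori reason it dominates a first-order term whose coefficient $\alpha_H-\alpha_{H_{\tau_0}}$ is unrelated to that gap.

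The actual argument in \cite{Chen-Wang-Yau2} does not linearize at all. The criticality of $\tau_0$ is used exactly, through $div_{\sigma}\, j(\tau_0)=0$: integrating $j(\tau_0)\cdot\nabla\tau$ by parts eliminates the first-order data (in particular the $\alpha_H-\alpha_{H_{\tau_0}}$ cross term is traded against the remaining components of $j(\tau_0)$), after which $E(\Sigma,\tau)-E(\Sigma,\tau_0)-E(\Sigma_{\tau_0},\tau)$ is exhibited as the integral of a pointwise nonnegative quantity, via a monotonicity-in-the-mean-curvature inequality applied with the two values $|H|<|H_{\tau_0}|$ at each point. This yields \eqref{comparison} globally, for every $\tau$ with $\sigma+d\tau\otimes d\tau$ of positive Gauss curvature, together with the equality case $\tau-\tau_0=\mathrm{const}$ --- something no second-variation argument could produce. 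Finally, the remaining ingredient $\bar E(\Sigma,\tau)=E(\Sigma_{\tau_0},\tau)\ge0$ is not free either: the Wang--Yau positivity theorem requires admissibility of the pair, and the paper notes this is verified directly for $\tau$ near $\tau_0$ in \cite{Chen-Wang-Yau2} (with the survey's last theorem offering a second-variation alternative for this Minkowski piece); your proposal assumes it silently. To repair your argument you would need to prove the exact inequality $D(\tau)\ge D(\tau_0)$ rather than an infinitesimal version of it.
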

The result is based on the following comparison theorem of quasi-local energy.
\begin{theorem} \label{thm_int}
Suppose  $\Sigma$ is a spacelike 2-surface and $\tau_0$ is a critical point of the quasi-local energy functional $E(\Sigma,\tau)$.  
Assume further that 
\[ |H_{\tau_0}| > |H|>0. \]
Then,  for any time function $\tau$ such that $ \sigma + d \tau \otimes d \tau$ has positive Gaussian curvature, we have
\begin{equation}\label{comparison} E(\Sigma,\tau) \ge E(\Sigma,\tau_0) +E(\Sigma_{\tau_0}, \tau).\end{equation}
Moreover, equality holds if and only if $\tau-\tau_0$ is a constant.  
\end{theorem}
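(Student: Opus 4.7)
The first step is a purely algebraic reduction. Using the definitions of $\rho$ and $j$, together with the identity
\[ \sinh^{-1}\bigl(\rho(a,b,\tau)\,\Delta\tau/(ab)\bigr) \;=\; \theta^\tau_a - \theta^\tau_b, \qquad \theta^\tau_a := \sinh^{-1}\bigl(-\Delta\tau/(a\sqrt{1+|\nabla\tau|^2})\bigr), \]
one can telescope the reference contributions with $|H_\tau|$ (the mean curvature norm of the isometric embedding at time $\tau$) as an intermediate to obtain
\[ E(\Sigma,\tau) - E(\Sigma_{\tau_0},\tau) \;=\; \hat E(\tau) := \frac{1}{8\pi}\int_\Sigma \bigl[\rho(|H_{\tau_0}|,|H|,\tau) + j(\tau;H,H_{\tau_0})\cdot\nabla\tau\bigr]\, d\Sigma. \]
On the right, the role of the ``reference'' data is played by the fixed pair $(|H_{\tau_0}|,\alpha_{H_{\tau_0}})$ instead of the isometric embedding data at $\tau$. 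At $\tau=\tau_0$ this frozen reference coincides with the isometric embedding data for $\tau_0$, so $\hat E(\tau_0) = E(\Sigma,\tau_0)$, and the comparison \eqref{comparison} becomes equivalent to the minimizing inequality $\hat E(\tau) \geq \hat E(\tau_0)$.

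Second, I would check that $\tau_0$ is a critical point of $\hat E$. Because $\hat E = E(\Sigma,\cdot) - E(\Sigma_{\tau_0},\cdot)$, this follows from the hypothesis that $\tau_0$ is critical for $E(\Sigma,\cdot)$ together with the second identity displayed in the excerpt, which asserts that any embedded spacelike 2-surface in $\R^{3,1}$ --- in particular $\Sigma_{\tau_0}$ --- is automatically a critical point of its own quasi-local energy functional at its own time function. Hence $\delta\hat E(\tau_0)=0$ and the problem reduces to showing $\tau_0$ is a \emph{global} minimum of $\hat E$.

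For the global minimization, my plan is to study the straight-line path $\tau_s := \tau_0 + s(\tau-\tau_0)$, set $F(s) := \hat E(\tau_s)$, and use $F'(0)=0$ together with $F''(s)\geq 0$ to conclude $F(1)\geq F(0)$. At leading order the Hessian of $\hat E$ is driven by $\rho$, which under the strict gap $|H_{\tau_0}|^2-|H|^2>0$ yields a positive-definite quadratic form in $(\nabla\eta,\Delta\eta)$ with $\eta := \tau-\tau_0$; the mixed contributions coming from $j\cdot\nabla\tau$ are integrated by parts and absorbed by Cauchy--Schwarz. Equality forces the positive-definite piece to vanish, which gives $\nabla\eta\equiv 0$, i.e.\ $\tau-\tau_0$ constant. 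The main obstacle is to control the Hessian uniformly along the whole path $\tau_s$ rather than only at the critical point: its coefficients depend on $\Delta\tau_s$ and $|\nabla\tau_s|^2$, so the linearized Reilly-type identity used by Miao--Tam--Xie at $\tau_0\equiv 0$ is not directly available. To handle this fully nonlinear regime I would adapt a Jang--Witten spinor argument in the spirit of the Wang--Yau positivity proof, reinterpreting $\hat E(\tau)-\hat E(\tau_0)$ as the quasi-local energy of a modified physical-reference pair and invoking the positivity theorem to close the argument.
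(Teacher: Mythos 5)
Your first two steps are exactly the right reduction, and they coincide with the argument in \cite{Chen-Wang-Yau2} (the survey itself only quotes Theorem \ref{thm_int} from that reference without reproducing the proof): the reference contributions at time function $\tau$ cancel in the difference $E(\Sigma,\tau)-E(\Sigma_{\tau_0},\tau)$, leaving a functional $\hat E(\tau)=\frac{1}{8\pi}\int_\Sigma\bigl[\rho(|H_{\tau_0}|,|H|,\tau)+j(\tau;H,H_{\tau_0})\cdot\nabla\tau\bigr]$ built from the two \emph{fixed} data sets $(|H|,\alpha_H)$ and $(|H_{\tau_0}|,\alpha_{H_{\tau_0}})$, and the theorem becomes the statement that $\tau_0$ is a global minimum of $\hat E$. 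The addition formula $\theta^\tau_a-\theta^\tau_b=\sinh^{-1}\bigl(\rho\,\Delta\tau/(ab)\bigr)$ you invoke is correct and is what makes the telescoping consistent with the displayed formula for $j_a$.

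The gap is in your third step. The positivity theorem cannot close the argument here, for three reasons: (i) it produces the lower bound $0$ for an admissible pair, whereas you need the lower bound $\hat E(\tau_0)=E(\Sigma,\tau_0)$, which is in general strictly positive; (ii) the frozen pair $(|H_{\tau_0}|,\alpha_{H_{\tau_0}})$ coupled with a time function $\tau\neq\tau_0$ is not the reference data of any isometric embedding, so $\hat E(\tau)-\hat E(\tau_0)$ is not a quasi-local energy to which the Jang--Witten machinery applies; and (iii) admissibility is not among the hypotheses (only positive Gauss curvature of $\sigma+d\tau\otimes d\tau$) --- indeed the whole point of Theorem \ref{thm_int} is to avoid positivity, which enters only afterwards, for the term $E(\Sigma_{\tau_0},\tau)$ in the proof of Theorem \ref{thm_local}. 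What actually closes the argument is an elementary pointwise convexity inequality: for constants $a>b>0$, the function $F(x)=\sqrt{a^2+x^2}-\sqrt{b^2+x^2}+x\bigl[\sinh^{-1}(x/b)-\sinh^{-1}(x/a)\bigr]$ satisfies $F'(x)=\sinh^{-1}(x/b)-\sinh^{-1}(x/a)$ and $F''(x)=(b^2+x^2)^{-1/2}-(a^2+x^2)^{-1/2}>0$, hence $F(x)\ge F(0)=a-b$ with equality if and only if $x=0$. Applied pointwise with $a,b$ the two mean curvature norms and $x$ essentially $\Delta\tau/\sqrt{1+|\nabla\tau|^2}$, and combined with the critical point equation $div_{\sigma}\, j(\tau_0)=0$ (which, after an integration by parts, disposes of the cross terms involving $\alpha_H-\alpha_{H_{\tau_0}}$ and $\nabla(\tau-\tau_0)$), this yields \eqref{comparison} directly, and the strictness of the convexity gives the equality case $\tau-\tau_0$ constant. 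So your intuition that the strict gap $|H_{\tau_0}|>|H|$ drives a convexity statement is correct, but it is realized by this explicit scalar inequality rather than by a second-variation estimate along the segment $\tau_s$ or by an auxiliary elliptic/spinor construction; as written, the hardest part of your plan (uniform control of $F''(s)$ along the whole path) is deferred to a tool that does not apply.
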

In the second term on the right hand side of \eqref{comparison}, we view $\Sigma_{\tau_0}$  in the Minkowski spacetime as a physical surface and consider the isometric embedding with time function $\tau$ as reference.

In view of Theorem \ref{thm_int}, it suffices to show that $E(\Sigma_{\tau_0},\tau )$ is non-negative for $\tau$ close to $\tau_0$ in order to prove Theorem \ref{thm_local}. In \cite{Chen-Wang-Yau2}, this is achieved by verifying the admissible condition directly. In the following, we present a new result on the second variation of $E(\Sigma_{\tau_0},\tau )$ around $\tau_0$.

\begin{theorem}
Consider a standard coordinate system $(x_0,x_1,x_2,x_3)$ for the Minkowski spacetime. Suppose $\Sigma$ is a spacelike surface in the Minkowski spacetime with 
embedding $X$. Assume further that the time function is $\tau$ and the projection of $\Sigma$ onto $\R^3$ is convex. Then, for any function $f$  on $\Sigma$, we have
\[
\begin{split} 
E(\Sigma,\tau)=&0\\
\partial_s E(\Sigma,\tau+sf)|_{s=0}=&0\\
\partial^2_s E(\Sigma,\tau+sf)|_{s=0}\ge&0\\
\end{split}
\]
Moreover, equality holds in the last inequality if and only if $f$ is the restriction of $a_0+ \sum a_i x_i$ to $\Sigma$ for some constants $a_0$ and $a_i$.
\end{theorem}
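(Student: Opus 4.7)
My plan is to derive (1) and (2) from rigidity in Minkowski and to obtain (3) from the Wang-Yau positivity theorem, then to identify the kernel of the Hessian for (4) via a linearization argument. The identity $E(\Sigma,\tau)=0$ is immediate: with $\tau$ the actual time function of $X$, I may take the reference isometric embedding $X_\tau=X$, so that $|H_0|=|H|$ and $\alpha_{H_0}=\alpha_H$ pointwise; in the $(\rho,j)$-form of the energy both $\rho$ and $j$ vanish and $E=0$. The identity $\partial_s E|_{s=0}=0$ is the fact already noted in the excerpt that a surface inside $\R^{3,1}$ is a critical point of the quasi-local energy with respect to isometric embeddings back to $\R^{3,1}$, and varying the time function to $\tau+sf$ is exactly such a variation. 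To obtain $\partial_s^2 E|_{s=0}\ge 0$ I invoke the Wang-Yau positivity theorem: the dominant energy condition is trivial in Minkowski, and convexity of $\widehat X(\Sigma)$ ensures that $\Sigma$ bounds a spacelike hypersurface and that $(X,T_0)$ is admissible. Since admissibility is an open condition, $(X_{\tau+sf},T_0)$ remains admissible for small $|s|$, so $E(\Sigma,\tau+sf)\ge 0$; combined with $E(\Sigma,\tau)=0$ this makes $s=0$ a local minimum, giving the non-negative second derivative.

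To characterize the equality case, I first exhibit a $4$-parameter family of null directions for the Hessian $Q(f,f):=\partial_s^2 E(\Sigma,\tau+sf)|_{s=0}$. When $f=a_0$ is constant, the path $\tau(s)=\tau+sa_0$ is realized by translating $X$ by $sa_0T_0$, a Euclidean isometry of $\R^{3,1}$ under which $E$ vanishes identically. When $f=\sum a_i x_i$, I take the path of unit future-timelike observers $T_0(s)=(T_0-s\vec a)/\sqrt{1-s^2|\vec a|^2}$; the associated time function $-X\cdot T_0(s)$ admits the expansion $\tau+s\sum a_i x_i+O(s^2)$, and along this path $E\equiv 0$ by Minkowski rigidity. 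Since $\tau$ is a critical point of $E$, the value $\partial_s^2 E|_{s=0}$ along any path depends only on its first-order velocity, so $Q(f,f)=0$ whenever $f$ lies in the span of $\{1,x_1,x_2,x_3\}$.

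The main obstacle is to prove the reverse inclusion, namely that this span is all of $\ker Q$. For this I would compute $Q$ explicitly by expanding $|H_0|$ and $\alpha_{H_0}$ to second order in $s$ via the linearized Weyl isometric embedding equation at $X$, substituting into the $(\rho,j)$-expression, and integrating by parts; the outcome should be a generalized Miao--Tam--Xie quadratic functional in $f$, $\nabla f$, and $\Delta f$, corrected by additional terms coming from $\alpha_H$. The delicate step is to identify the kernel of this quadratic form as exactly the first-order variations of $\tau$ induced by infinitesimal rigid motions of $\R^{3,1}$ acting on $X$. Convexity of $\widehat X(\Sigma)$ is essential here, both because it makes the linearized isometric embedding operator elliptic with kernel the Killing fields of $\R^{3,1}$ tangent to the embedding and because it supplies the classical infinitesimal rigidity of convex surfaces in $\R^3$ needed to rule out spurious null directions; combining these with the already-established $Q\ge 0$ should force any $f\in\ker Q$ to be the restriction of a spacetime-affine function of the form $a_0+\sum a_i x_i$.
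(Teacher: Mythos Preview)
Your treatment of $E(\Sigma,\tau)=0$, the vanishing first variation, the non-negativity of the second variation, and the ``if'' direction of the equality case all agree with the paper, which likewise appeals to Minkowski rigidity and the Wang--Yau positivity theorem.

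For the ``only if'' direction of the equality case, your route via a direct second-order expansion of the $(\rho,j)$-energy together with infinitesimal rigidity of convex surfaces is genuinely different from the paper's, and as written it has a gap. The paper does not attempt to compute the Hessian $Q$ as an explicit quadratic form on $\Sigma$. Instead it traces equality through the machinery of the positivity proof: one solves the Jang equation on a spacelike filling $\Omega$ with boundary data $\tau+\delta\tau$, applies the Schoen--Yau inequality $R+2\,\mathrm{div}\,V-2|V|^2\ge\sum(h_{ij}-p_{ij})^2$ on the Jang graph, glues on a Shi--Tam exterior, and runs the Witten spinor argument. This produces the lower bound
\[
E(\Sigma,\tau+\delta\tau)\ \ge\ \int_{\widetilde\Omega_{F+\delta F}}\tfrac{1}{2}|\nabla_s\Psi|^2+\tfrac{1}{4}\sum_{i,j}(h_{ij}-p_{ij})^2|\Psi|^2,
\]
and since at $\delta\tau=0$ one has $h_{ij}=p_{ij}$ and $\Psi=\Psi_0$ constant, the second variation is bounded below by $\int_{\widetilde\Omega}\sum(\delta h_{ij})^2|\Psi_0|^2$. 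Vanishing therefore forces the \emph{pointwise} condition $\delta h_{ij}=0$ on the filling, which via the Gauss formula becomes $\nabla^{(4)}_i\nabla^{(4)}_j\,\delta F=0$; hence $\delta F$ is spacetime-affine and its boundary value $f$ has the stated form.

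The gap in your proposal is that you never arrive at such a pointwise condition. The Miao--Tam--Xie formula you invoke was derived only at $\tau_0=0$ and was used to prove strict positivity under $H_0>|H|$, a hypothesis that fails identically in the Minkowski setting; extracting the kernel from that expression, let alone from its unspecified generalization to nonzero $\tau_0$ with $\alpha_H$-corrections, is not addressed by classical infinitesimal rigidity in $\R^3$, which concerns isometric deformations of the embedding rather than null directions of $Q$. The paper's detour through the Jang equation and spinors is precisely what converts the variational equality into an overdetermined PDE on a three-dimensional filling, from which the affine conclusion is immediate.
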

\begin{proof}
These results, except the equality cases, are proved in Theorem 2 and Lemma 1 of \cite{Chen-Wang-Yau2} where the positivity of $ E(\Sigma,\tau+sf)$ is obtained by the positivity of the Wang-Yau mass. In the following, we go over the proof of the positivity of quasi-local energy and extract the necessary information for the equality case.

Let $\Omega$ be a spacelike hypersuface with boundary $\Sigma$. Let $g_{ij}$ and $p_{ij}$ be the induced metric and the second fundamental form of $\Omega$ in $\R^{3,1}$, respectively. 
Let $e_3$ be the unit outward normal of $\Sigma$ in $\Omega$ and $e_4$ be the future directed 
unit normal of $\Omega$ in $\R^{3,1}$. The Jang equation is used in both the proof of positive mass theorem in \cite{SY2} and the proof of the positivity of quasi-local energy in \cite{Wang-Yau2}. We will use it here to study the equality case for the second variation. In a local coordinate on $\Omega$, the Jang equation takes the following form: 
\[   \sum_{i,j=1}^3(g^{ij }   -\frac{f^i f^j}{1+|Df|^2}) (\frac{D_iD_j f}{\sqrt{1+ |Df|^2}} -p_{ij}) =0  \]
where $D$ is the covariant derivative with respect to the metric $g_{ij}$.

Let $F$ be the solution to Dirichlet problem of Jang's equation on $\Omega$ with boundary value $\tau$. Consider a variation of the time function 
by $\delta \tau$. Let $F+ \delta F$ be the solution to Dirichlet problem of Jang's equation on $\Omega$ with boundary value $\tau+\delta \tau$. 
Let $\widetilde \Omega_{F+ \delta F}$ be the graph of $F+ \delta F$  over $\Omega$ in $\Omega \times \R$. Let $R_{F+ \delta F}$ be the scalar curvature of  $\widetilde \Omega_{F+ \delta F}$. 
By \cite{SY2}, there exists a vector field $V_{F+ \delta F}$ on $\widetilde \Omega_{F+ \delta F}$  such that 
\begin{equation} \label{scalar_inequality}
R_{F+ \delta F}+ 2 div (V_{F+ \delta F}) -2|V_{F+ \delta F}|^2 \ge \sum_{i, j}(h_{ij}-p_{ij})^2 \end{equation}
where $h_{ij}$ is the second fundamental form of $\widetilde \Omega_{F+ \delta F}$ in $\Omega \times \R$. Indeed, 
\[  h_{ij}= \frac{D_iD_j f}{\sqrt{1+ |Df|^2}}.\]

By Proposition 2.1 of \cite{Wang-Yau2},
\[ E(\Sigma, \tau + \delta \tau) \ge  \int_{\widehat \Sigma_{ \tau + \delta \tau}} \widehat H_ {\tau + \delta \tau} -\int_{\Sigma} h(\Sigma , \tau , e_3'),  \]
where 
\[ e'_3= \cosh \theta e_3 + \sinh \theta e_4   \]
with
\[  \sinh \theta = \frac{-e_3(F+ \delta F)}{\sqrt{1+|\nabla (\tau + \delta \tau)|^2}} \]
and  $h(\Sigma , \tau , e_3')$ is the generalized mean curvature with respect to $e'_3$.

Consider the exterior region $M_1$ of $\widehat \Sigma_{ \tau + \delta \tau}$ in $\R^3$, which is foliated by the level set of the distance function to the surface. Using the foliation, one can rewrite the flat metric of $\R^3$ as 
\[ g= dr^2+ g(r,u^a)_{ab} du^adu^b.\]
Consider a lapse function $u$ and new metric of the form
\[ g'=u^2dr^2+ g(r,u^a)_{ab} du^adu^b \]
on $M_1$. Consider the equation 
\[R(g') =0 \]
with boundary condition 
\[ u =\frac{\widehat H_{ \tau + \delta \tau}}{\tilde H- \langle V_{F+ \delta F}, \nu \rangle}  \]
where $\tilde H$ and $\nu$ are the mean curvature  and unit normal vector of $\widetilde \Sigma_{\tau + \delta \tau}$ in $\widetilde \Omega_{F+ \delta F}$, respectively. 
The solution gives an asymptotically flat metric on $M_1$ such that
\[  \int_{\widehat \Sigma_{ \tau + \delta \tau}} \widehat H_ {\tau + \delta \tau} -\int_{\Sigma} h(\Sigma , \tau , e_3')  \ge  m_{ADM}(M_1,g').\] 
To show the positivity of $ m_{ADM}(M_1,g')$, consider the manifold $M$ obtained by glueing  $\widetilde \Omega_{F+ \delta F}$ and $M_1$. 
Let $\mathcal D$ be the Dirac operator and $\nabla_s$ be the spin connection of the spinor bundle on $M$. Let $\Psi$ be the solution to the Dirac equation 
\[ \mathcal D \Psi =0,  \]
which approaches a constant spinor at infinity. Then, by section 5 of \cite{Wang-Yau2},
\[ m_{ADM}(M_1)= \int_{M_1}  |\nabla_s \Psi|^2 + \int_{\widetilde \Omega_{F+ \delta F}}  |\nabla_s \Psi|^2 + \frac{1}{4} R_{F+ \delta F} |\Psi|^2.\]
Moreover, for any vector field $Y$ on $ \Omega_{F+ \delta F} $,
\[ \int_{\widetilde \Omega_{F+ \delta F}}  |\nabla_s \Psi|^2 + \frac{1}{4} R_{F+ \delta F} |\Psi|^2 \ge \int_{\widetilde \Omega_{F+ \delta F}}  \frac{1}{2}|\nabla_s \Psi|^2 + \frac{1}{4} (R_{F+ \delta F}+2 div Y -2|Y|^2 )|\Psi|^2.  \]
If we apply the equation  to $Y=V_{F+\delta F}$ and use equation \eqref{scalar_inequality}, it follows that
\[   \int_{\widetilde \Omega_{F+ \delta F}}  |\nabla_s \Psi|^2 + \frac{1}{4} R_{F+ \delta F} |\Psi|^2   \ge \int_{\widetilde \Omega_{F+ \delta F}}  \frac{1}{2}|\nabla_s \Psi|^2 + \frac{1}{4} \sum_{i, j}(h_{ij}-p_{ij})^2|\Psi|^2.\]

Combining these results, we have
\[ E(\Sigma, \tau+\delta \tau) \ge  \int_{\widetilde \Omega_{F+ \delta F}}  \frac{1}{2}|\nabla_s \Psi|^2+ \frac{1}{4} \sum_{ij}(h_{ij}-p_{ij})^2|\Psi|^2.\]
When $\delta \tau =0$, we have
\[ E(\Sigma, \tau)=0. \]
Indeed, the Dirac spinor $\Psi$ in this case is a constant spinor, $\nabla_s \Psi=0$, and $h_{ij}=p_{ij}$.
As a result, one has the following lower bound  for the second variation
\[   \int_{\widetilde \Omega_{F+ \delta F}}    \sum_{ij} (\delta h_{ij})^2|\Psi_0|^2   \]
where $\Psi_0$ is the constant spinor obtained when $\delta \tau =0$. Hence, the second variation is strictly positive unless $\delta h_{ij}=0$. On the other hand,
\[   \delta h_{ij}= \frac{D_i D_j  \delta F}{\sqrt{1+|D F|^2}}- \frac{D_i D_j F}{(1+|D F|^2)^{\frac{3}{2}}} D F \cdot D \delta F.  \]
As a result, the second variation is positive unless $\delta F$ satisfies
\[    \frac{D_i D_j  \delta F}{\sqrt{1+|D F|^2}}- \frac{D_i D_j F}{(1+|D F|^2)^{\frac{3}{2}}} D F \cdot D \delta F =0.\]
This is the same as 
\begin{equation} \label{Hessian}
D_i D_j  \delta F = p_{ij} \frac{ D F \cdot D \delta F}{\sqrt{1+|D F|^2}}. 
\end{equation}
However, the covariant derivative, $D$, on hypersurface $\Omega$ and the covariant derivative of $\R^{3,1}$, $\nabla^{(4)}$, are related by
 \[ D_{i} D _j f= \nabla_i^{(4)} \nabla_j^{(4)} f - p_{ij}e_4(f)\]
for any function $f$ on  $\R^{3,1}$.
As a result, if $\delta F$ satisfies equation \eqref{Hessian}, we can extend $\delta F$ suitably such that 
\[  \nabla_i^{(4)} \nabla_j^{(4)} \delta F=0.\]
This shows that, 
\[  \delta F = (a_0 + \sum_i a_i X^i)|_{\Omega}. \]
Thus, the second variation for $E(\Sigma, \tau+ \delta \tau)$ being zero implies that 
$\delta \tau$ is the restriction of $ a_0 + \sum_i a_i X^i $ to $\Sigma$. 

On the other hand, it is easy to show that if $\delta \tau$ is the restriction of $ a_0 + \sum_i a_i X^i $ to $\Sigma$ then the second variation is zero. Indeed, such $\delta \tau$  corresponds to rotating the observer when the data remains unchanged. As a result, the energy remains zero. In particular, the second variation is zero.
\end{proof}

\end{document}